\theoremstyle {plain}
\newtheorem {thm}{Theorem}[section]
\newtheorem {lem}[thm]{Lemma}
\newtheorem {cor}[thm]{Corollary}
\theoremstyle {definition}
\newtheorem {defn}[thm]{Definition}
\theoremstyle {remark}
\newtheorem {rem}[thm]{Remark}
\newtheorem {nota}[thm]{Notation}
\newtheorem {exmp}[thm]{Example}
\DeclareMathOperator{\minAss}{minAss}
\DeclareMathOperator{\height}{ht}
\DeclareMathOperator{\LT}{LT}
\DeclareMathOperator{\LC}{LC}
\DeclareMathOperator{\lcm}{lcm}
\newcommand{\F}{{\mathbb F}}
\newcommand{\N}{{\mathbb N}}
\newcommand{\Q}{{\mathbb Q}}
\newcommand{\Z}{{\mathbb Z}}
\newcommand{\gen}[1]{\left\langle #1 \right\rangle}
\newcommand{\singular}{{\sc Singular }}
\begin{document}

\bibliographystyle{alpha}

\title{An Algorithm for Primary Decomposition in Polynomial Rings over the Integers}

\author{Gerhard Pfister}
\address{Gerhard Pfister\\ Department of Mathematics\\ University of Kaiserslautern\\
Erwin-Schr\"odinger-Str.\\ 67663 Kaiserslautern\\ Germany}
\email{pfister@mathematik.uni-kl.de}
\urladdr{http://www.mathematik.uni-kl.de/$\sim$pfister} 

\author{Afshan Sadiq}
\address{Afshan Sadiq\\ Abdus Salam School of Mathematical Sciences\\ GC University\\ 
Lahore\\ 68-B\\ New Muslim Town\\ Lahore 54600\\ Pakistan}
\email{afshanatiq@gmail.com}

\author{Stefan Steidel}
\address{Stefan Steidel\\ Department of Mathematics\\ University of Kaiserslautern\\ 
Erwin-Schr\"odinger-Str.\\ 67663 Kaiserslautern\\ Germany}
\email{steidel@mathematik.uni-kl.de}
\urladdr{http://www.mathematik.uni-kl.de/$\sim$steidel} 

\keywords{Gr\"obner bases, primary decomposition, modular computation, parallel computation}

\thanks{Part of the work was done at ASSMS, GCU Lahore -- Pakistan.}

\date{\today}

\maketitle

\begin{abstract}
We present an algorithm to compute a primary decomposition of an ideal in 
a polynomial ring over the integers. For this purpose we use
algorithms for primary decomposition in polynomial rings over the rationals
resp. over finite fields, and the idea of Shimoyama--Yokoyama resp.
Eisenbud--Hunecke--Vasconcelos to extract primary ideals from pseudo--primary
ideals. A parallelized version of the algorithm is implemented in {\sc Singular}. 
Examples and timings are given at the end of the article.
\end{abstract}

\section{Introduction} \label{secIntro}

Algorithms for primary decomposition in $\Z[x_1,\ldots,x_n]$ have been developed by
Seidenberg (cf. \cite{Se}), Gianni, Trager, Zacharias (cf. \cite{GTZ}) and Ayoub (cf. \cite{A}). 
Within this article we present a slightly different approach which mainly uses primary 
decomposition in polynomial rings over a field and therefore seems to be much more efficient. 
In particular, it uses primary decomposition in $\Q[x_1,\ldots,x_n]$ resp. $\F_p[x_1,\ldots,x_n]$ 
as well as the computation of the minimal associated primes of an ideal in 
$\F_p[x_1,\ldots,x_n]$\footnote{One can choose one of the modern algorithms, cf. \cite{DGP}, 
\cite{EHV}, \cite{GTZ}, \cite{SY}.}, pseudo--primary decomposition\footnote{An ideal is called 
\emph{pseudo--primary} if its radical is prime, cf. \cite{EHV}, \cite{SY}.}, and the extraction of 
the primary components. The essential difference compared to the corresponding algorithm
proposed in \cite{GTZ} is as follows: the primary decomposition of an ideal $I$ in $\Z[x_1,\ldots,x_n]$
with $I \cap \Z = \gen q$ such that $q \neq 0$ is obtained by computing the minimal associated
prime ideals of $I\F_p[x_1,\ldots,x_n]$ for all primes $p$ dividing $q$ and extracting subsequently
the primary ideals.

Let $x = \{x_1,\ldots,x_n\}$ always denote a set of indeterminates and let $I\subseteq \Z[x]$ 
be an ideal. We use the following known facts from commutative algebra for our algorithm:
\begin{enumerate}
\item[(1)] If $I\cap \Z=\langle 0\rangle$, then there exists an $h\in \Z$ such
           that $I:h=I\Q[x]\cap \Z[x]$ and $I=(I:h)\cap \langle I, h\rangle$
           (cf. \cite{Se}, Theorem 2).
\item[(2)] If $I\cap \Z=\langle 0\rangle$ and $I\Q[x] = \overline Q_1 \cap \ldots
           \cap \overline Q_s$ is an irredundant primary decomposition with $\overline P_i
           = \sqrt{\overline Q_i}$, then $I\Q[x] \cap \Z[x] = (\overline Q_1 \cap \Z[x])
           \cap \ldots \cap (\overline Q_s \cap \Z[x])$ is an irredundant primary 
           decomposition and $\overline P_i \cap \Z[x] = \sqrt{\overline Q_i \cap \Z[x]}$
           (cf. \cite{Se}, Theorem 3).
\item[(3)] If $I\cap \Z=\langle q\rangle$ such that $q\neq 0$ and $q=p_1^{\nu_1}\cdots
           p_r^{\nu_r}$ with $p_1, \ldots, p_r$ pairwise different primes, then
           $I=\bigcap_{i=1}^r \langle I, p_i^{\nu_i}\rangle$.
\item[(4)] If $I\cap \Z=\langle p^\nu\rangle$ for some prime $p$ and $\overline{P}_1,
           \ldots, \overline{P}_s$ are the minimal associated primes of $I\F_p[x]$, then
           the canonical liftings\footnote{Choose generators in $\F_p[x]$ and lift the coefficients 
           to non--negative integers smaller than $p$.} $P_1, \ldots, P_s$ to $\Z[x]$ are the 
           minimal associated primes of $I$. 
           
           If $\nu = 1$ let $I\F_p[x] = \overline Q_1 \cap \ldots \cap \overline Q_s$ be an irredundant
           primary decomposition with associated primes $\overline P_1,\ldots, \overline P_s$ and 
           $Q_1,\ldots,Q_s,P_1,\ldots,P_s$ be the canonical liftings to $\Z[x]$. Then $I=Q_1 \cap \ldots
           \cap Q_s$ is an irredundant primary decomposition with associated primes $P_1,\ldots, P_s$. 
\end{enumerate}

The following result can easily be adapted to $\Z[x]$.

\begin{enumerate}
\item[(5)] If $P$ is a minimal associated prime of $I$, then $I+P^m$ is a
           pseudo--primary component of $I$ for a suitable $m \in \N$, i.e. the equidimensional 
           part of $I+P^m$ is the primary component of $I$ associated to $P$. For any $m$ let 
           $Q_m$ be the equidimensional part of $I+P^m$. $Q_m$ is a primary component of $I$ 
           with associated prime $P$ if $Q_m = I\Z[x]_P \cap \Z[x]$ (cf. \cite{EHV}).

           Alternatively we can compute a separator\footnote{We call $s$ a separator 
           of $I$ w.r.t. $P$ if $s\notin P$ and $s$ is contained in all other minimal associated 
           primes of $I$.} $s$ of $I$ w.r.t. $P$ and obtain by $I:s^\infty$ a pseudo--primary 
           component of $I$ (cf. \cite{SY}).
\item[(6)] If $Q_1, \ldots, Q_s$ are the primary components of $I$
           associated to the minimal associated prime ideals and $J=Q_1\cap\ldots \cap
           Q_s$, then there exists a natural number $m$ such that $I=J\cap (I+(I:J)^m)$.
\end{enumerate}

Consequently, by applying (1)--(6), we can reduce the computation of the primary 
decomposition in $\Z[x]$ to the computation of the primary decomposition in $\Q[x]$, the
computation of the minimal associated primes in $\F_p[x]$, and the extraction
of the primary components in $\Z[x]$. In this connection, the extraction has to be generalized 
to polynomial rings over principal ideal domains (cf. Lemma \ref{lemExtraction}). In section 
\ref{secBasDefRes} we state the results used in the algorithm, whereupon in section \ref{secAlg} 
we explain our algorithm which has been implemented in {\sc Singular} in a parallel version. 
Finally we give some examples and the corresponding timings in section \ref{secExTime}.

\section{Basic definitions and results} \label{secBasDefRes}

\begin{defn}
Let $I\subseteq \Z[x]$ be an ideal and $>$ be a monomial ordering on $\Z[x]$. A subset 
$G \subseteq I$ is called a \emph{Gr\"obner basis} of $I$ w.r.t. $>$ if the leading ideal of
$G$ equals the leading ideal of $I$. $G$ is called a \emph{strong Gr\"obner basis} if for 
all $f\in I$ there exists a $g\in G$ such that $\LT(g)|\LT(f)$.\footnote{We use the notations 
of \cite{GP} for the basics of Gr\"obner bases. Especially $\LT(f)$ denotes the leading term 
(leading monomial with leading coefficient) of $f$ w.r.t. the ordering $>$. The theory of (strong)
Gr\"obner bases over principal ideal domains can be found in \cite{AL}, section 4.5.}
\end{defn}

\begin{lem} \label{lemGBQtoZ}
Let $G=\{g_1, \ldots, g_k\} \subseteq \Z[x]$ and $I=\langle G \rangle\Z[x]$. 
Assume that $I \cap \Z=\gen 0$ and $G$ is a Gr\"obner basis of $I\Q[x]$ 
w.r.t. some ordering.
Let $h = \lcm(\LC(g_1),\ldots, \LC(g_k))$ be the least common multiple of the 
leading coefficients of $g_1,\ldots,g_k$. Then $I\Q[x] \cap \Z[x]=I:h^\infty$.
Moreover, if $I:h^\infty = I:h^m$ for some natural number $m$, then $I = (I:h^m) \cap \gen{I,h^m}$.
\end{lem}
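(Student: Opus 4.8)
The statement splits into the equality $I\Q[x]\cap\Z[x]=I:h^\infty$ and the decomposition $I=(I:h^m)\cap\gen{I,h^m}$ under the stabilization hypothesis $I:h^\infty=I:h^m$. The plan is to treat these independently, since the decomposition is a formal consequence once the quotient stabilizes, whereas the equality carries the real content. For the equality, the inclusion $I:h^\infty\subseteq I\Q[x]\cap\Z[x]$ is immediate: if $h^Nf\in I$ for some $N$ and some $f\in\Z[x]$, then $f=h^{-N}(h^Nf)\in I\Q[x]$, because $h$ is a nonzero integer and hence invertible in $\Q[x]$, and trivially $f\in\Z[x]$.

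For the reverse inclusion $I\Q[x]\cap\Z[x]\subseteq I:h^\infty$ I would argue by induction on $\LM(f)$ for $f\in I\Q[x]\cap\Z[x]$, using that $G$ is a Gr\"obner basis of $I\Q[x]$. If $f\neq0$, then $\LM(f)$ lies in the leading ideal of $I\Q[x]$, so some $\LM(g_i)$ divides $\LM(f)$; write $\LM(f)=m\,\LM(g_i)$ for a monomial $m$. Then
\[
f'=\LC(g_i)\,f-\LC(f)\,m\,g_i
\]
lies in $\Z[x]\cap I\Q[x]$, since $\LC(f),\LC(g_i)\in\Z$ and $g_i\in I$, and its leading terms cancel so that $\LM(f')<\LM(f)$ (or $f'=0$). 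By induction $h^{N'}f'\in I$ for some $N'$. Writing $h=c\,\LC(g_i)$ with $c\in\Z$, since $\LC(g_i)\mid h$, one obtains
\[
h^{N'+1}f=c\bigl(h^{N'}f'+h^{N'}\LC(f)\,m\,g_i\bigr)\in I,
\]
because $g_i\in I$. The induction is well-founded as $>$ is a well-ordering on monomials; the base case $f=0$ is trivial. Hence $f\in I:h^\infty$, proving the equality.

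The decomposition is then a direct verification. The inclusion $I\subseteq(I:h^m)\cap\gen{I,h^m}$ holds trivially. Conversely, take $f\in(I:h^m)\cap\gen{I,h^m}$ and write $f=g+h^m a$ with $g\in I$ and $a\in\Z[x]$. From $f\in I:h^m$ we get $h^mf\in I$, whence $h^{2m}a=h^mf-h^mg\in I$, so $a\in I:h^{2m}\subseteq I:h^\infty=I:h^m$ by the stabilization hypothesis. Therefore $h^m a\in I$ and $f=g+h^m a\in I$, which gives the reverse inclusion and completes the proof.

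The main obstacle is the reverse inclusion of the equality: one must certify that the only denominators introduced when reducing $f$ against the Gr\"obner basis over $\Q[x]$ are products of the leading coefficients $\LC(g_i)$, each of which divides $h$. The inductive subtraction producing $f'$ is precisely the bookkeeping that converts the $\Q[x]$-division into an integral membership after multiplying by a suitable power of $h$; the delicate point is to confirm both that a divisor $\LM(g_i)\mid\LM(f)$ always exists while $f\neq0$ (guaranteed by the Gr\"obner property) and that the strict decrease of $\LM$ terminates the recursion.
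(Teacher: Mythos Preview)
Your proof is correct and is precisely the standard argument the paper alludes to when it writes that ``the proof of Lemma~\ref{lemGBQtoZ} is similar to the corresponding proof for polynomial rings over a field (cf.\ \cite{GP}, Proposition~4.3.1)''---the inductive reduction showing that only powers of the $\LC(g_i)$ arise as denominators, together with the formal saturation splitting, is exactly that approach spelled out in the present setting.
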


The proof of Lemma \ref{lemGBQtoZ} is similar to the corresponding proof for
polynomial rings over a field (cf. \cite{GP}, Proposition 4.3.1).

\begin{rem}
The saturation $I:h^\infty$ can be computed in $\Z[x]$ similarly to the case of a polynomial
ring over a field by computing a Gr\"obner basis of $\gen{I,Th-1}\Z[x,T]$ w.r.t. an 
elimination ordering for $T$: $$I:h^\infty = \gen{I,Th-1}\Z[x,T] \cap \Z[x].$$ 
A natural number $m$ satisfying $I:h^\infty = I:h^m$ can be found by computing the 
normal form of $h^lg$ w.r.t. $I$ for each generator $g$ of $I:h^\infty$ and increasing 
$l \in \N$. More precisely, if the normal form of $h^lg$ w.r.t. $I$ is zero for each generator
$g$ of $I:h^\infty$ then $h^l \cdot (I:h^\infty) \subseteq I$, i.e. $I:h^\infty = I:h^l$.
\end{rem}

\begin{lem}[cf. \cite{SY}] \label{lemSeparator}
Let $I\subseteq \Z[x]$ be an ideal with more than one minimal associated prime, $P$ 
a minimal associated prime and $s \notin P$ a separator, i.e. $s$ is contained in all 
other minimal associated primes of $I$. 
Then $I:s^\infty$ is a pseudo--primary component of $I$, and
$s$ can be chosen as $$\prod_{\substack{Q \neq P \\ Q \in \minAss(I)}} s_Q$$ 
where $s_Q$ is an element of a Gr\"obner basis of $Q$ which is not in $P$.
\end{lem}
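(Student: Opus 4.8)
The plan is to verify the two assertions separately: first that the displayed product is a genuine separator, and then that saturating $I$ by any separator yields a pseudo-primary ideal.

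First I would establish that the product is a valid separator. Write $\minAss(I) = \{P = P_1, P_2, \ldots, P_r\}$ with $r \geq 2$. For each $Q = P_i$ with $i \geq 2$, minimality forces $Q \not\subseteq P$, since distinct minimal primes are incomparable. Hence not every element of a Gr\"obner basis of $Q$ can lie in $P$, for otherwise $Q$ would be generated by elements of $P$ and we would get $Q \subseteq P$; so an element $s_Q$ of the Gr\"obner basis with $s_Q \notin P$ exists. Setting $s = \prod_{Q} s_Q$, primality of $P$ gives $s \notin P$, while for each fixed $Q$ the factor $s_Q \in Q$ gives $s \in Q$. Thus $s \notin P$ and $s$ lies in every other minimal associated prime, exactly as required for a separator.

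Next I would compute the radical of $I : s^\infty$. The key identity is $\sqrt{I : s^\infty} = \sqrt{I} : s^\infty$, which follows directly from the definitions of radical and saturation. Since $\Z[x]$ is Noetherian, $\sqrt{I} = P_1 \cap \ldots \cap P_r$, and saturation distributes over finite intersections, so
$$\sqrt{I:s^\infty} = \bigcap_{i=1}^r (P_i : s^\infty).$$
For each prime $P_i$ the saturation collapses: if $s \in P_i$ then $P_i : s^\infty = \Z[x]$, and if $s \notin P_i$ then primality gives $P_i : s^\infty = P_i$. By the separator property $s \notin P_1 = P$ while $s \in P_i$ for $i \geq 2$, so the intersection reduces to $P$. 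Hence $\sqrt{I:s^\infty} = P$ is prime, i.e. $I : s^\infty$ is pseudo-primary with radical $P$.

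The steps are individually routine; the point requiring care is confirming that the two standard facts I rely on — that the radical of an ideal is the intersection of its minimal primes, and that saturation by a prime behaves as described above — hold verbatim over $\Z[x]$ rather than over a field. Both are consequences of $\Z[x]$ being Noetherian, so no essential difficulty arises. I would also record that $I : s^\infty$ is a proper ideal: since $s \notin P \supseteq I$ we have $s \notin \sqrt{I}$, so no power of $s$ lies in $I$ and therefore $I : s^\infty \neq \Z[x]$, ensuring that its radical $P$ is a genuine prime ideal.
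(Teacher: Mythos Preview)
The paper does not supply its own proof of this lemma; it merely records the statement with a reference to \cite{SY}. Your direct argument is correct and self-contained: the construction of the separator from Gr\"obner basis elements works exactly because distinct minimal primes are incomparable, and the chain
\[
\sqrt{I:s^\infty}=\sqrt{I}:s^\infty=\bigcap_{i=1}^r (P_i:s^\infty)=P
\]
is valid in any Noetherian ring, so nothing special about working over $\Z$ rather than a field is needed.

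One small addition would round out the word \emph{component} in the conclusion. You have shown that $I:s^\infty$ is pseudo-primary with radical $P$, but the algorithm later relies on the fact that extracting the $P$-primary part of $I:s^\infty$ really gives the $P$-primary component of $I$ itself. This follows in one line: since $s\notin P$, the element $s$ is a unit in the localization $\Z[x]_P$, hence $(I:s^\infty)\Z[x]_P=I\Z[x]_P$, and contracting back gives the same $P$-primary component. With that remark your proof covers everything the lemma is used for.
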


\begin{lem}[Extraction Lemma, cf. \cite{GTZ}] \label{lemExtraction}
Let $I=Q\cap J$ be pseudo--primary with $\sqrt{I}=P$ and $Q$ be $P$--primary with 
$\height(Q)<\height(J)$. Let $P\cap \Z=\langle p \rangle$ for some prime $p$ and $u\subset x$ 
be a maximal independent set of variables for $\overline P=P\F_p[x]$\footnote{$u \subset x$ is
called a \emph{maximal independent set} of variables for $\overline P \subseteq \F_p[x]$ if 
$\overline P \cap \F_p[u] = \gen 0$ and $\# u = \dim(\F_p[x]/ \overline P)$; cf. \cite{GP}.}. 
Let $R:=\Z[u]_{\langle p\rangle}$, then the following hold:
\begin{enumerate}
\item[(1)] $I R[x \smallsetminus u]\cap\Z[x]=Q$
\item[(2)] Let $G$ be a strong Gr\"obner basis of $I$ w.r.t. a block 
           ordering satisfying $x \smallsetminus u\gg u$. Then $G$ is a strong Gr\"obner
           basis of $IR[x\smallsetminus u]$ w.r.t. the induced ordering 
           for the variables $x\smallsetminus u$. 
\item[(3)] Let $G=\{g_1, \ldots, g_k\}$ be as in (2), $\LT_{R[x\smallsetminus u]}(g_i) = 
           p^{\nu_i}a_i(x\smallsetminus u)^{\beta_i}$ with $a_i\in \Z[u] \smallsetminus 
           \langle p\rangle$ for $i = 1,\ldots,k$, and $h= \lcm(a_1,\ldots,a_k)$. Then 
           $IR[x\smallsetminus u]\cap\Z[x]=I:h^\infty$.
\end{enumerate}
\end{lem}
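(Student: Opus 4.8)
The plan is to realize $R[x\smallsetminus u]$ as a localization of $\Z[x]$ and to treat $R=\Z[u]_{\langle p\rangle}$ as what it is: a discrete valuation ring with uniformizer $p$, fraction field $\Q(u)$ and residue field $\F_p(u)$. Writing $S=\Z[u]\smallsetminus\langle p\rangle$, one has $R[x\smallsetminus u]=S^{-1}\Z[x]$, so extension to $R[x\smallsetminus u]$ followed by contraction to $\Z[x]$ is just the saturation with respect to $S$. With this dictionary, (1) becomes a statement about which primary components survive localization, (2) is the transfer of the strong Gr\"obner basis property under the block ordering, and (3) says that the single element $h$ already performs the full saturation by $S$. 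I expect (2) to be the main obstacle, because over the ring $R$ --- as opposed to a field --- divisibility of leading terms involves the $p$--adic valuation $v_p$ of leading coefficients and not merely the leading monomials in $x\smallsetminus u$.

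For (1) I would invoke the standard description of localization: if $I=\bigcap_i Q_i$ is an irredundant primary decomposition with $\sqrt{Q_i}=\mathfrak p_i$, then $S^{-1}I\cap\Z[x]=\bigcap_{\mathfrak p_i\cap S=\emptyset}Q_i$. Since $I$ is pseudo--primary with $\sqrt I=P$, the prime $P$ is the unique minimal associated prime, and the hypothesis $\height(Q)<\height(J)$ forces $Q$ to be the isolated $P$--primary component while the associated primes of $J$ are exactly the embedded ones, all strictly containing $P$. It remains to check that $S$ avoids $P$ but meets every embedded prime. For $P$: if $f\in P\cap\Z[u]$ then its reduction lies in $\overline P\cap\F_p[u]=\langle 0\rangle$, hence $f\in\langle p\rangle$; thus $P\cap\Z[u]\subseteq\langle p\rangle$ and $P\cap S=\emptyset$. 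For an embedded prime $P'\supsetneq P$ we have $p\in P'$, so $\overline{P'}\supsetneq\overline P$ in $\F_p[x]$ and therefore $\dim(\F_p[x]/\overline{P'})<\#u$; hence $u$ is not independent modulo $\overline{P'}$, giving a nonzero element of $\overline{P'}\cap\F_p[u]$ whose lift lies in $P'\cap\Z[u]$ but not in $\langle p\rangle$, i.e.\ in $P'\cap S$. This yields $S^{-1}I\cap\Z[x]=Q$.

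The heart of the argument is (2). After clearing a denominator in $S$ (a unit of $R$, which changes $\LT_{R[x\smallsetminus u]}$ only by a unit) I may assume $f\in I$. In the block ordering with $x\smallsetminus u\gg u$ the leading $(x\smallsetminus u)$--monomial $(x\smallsetminus u)^{\beta_0}$ of $f$ coincides with the one seen over $R$, so the monomial part of the desired divisibility is unproblematic; what must be controlled is $v_p(\LC_{R[x\smallsetminus u]}(f))$. The plan is to reduce $f$ to $0$ by $G$ in the block ordering and to follow only the coefficient of $(x\smallsetminus u)^{\beta_0}$: since $\beta_0$ is the top $(x\smallsetminus u)$--degree, every reduction touching that degree uses some $g_i$ with $\beta_i\le\beta_0$ and contributes a $\Z[u]$--multiple of its $R$--leading coefficient $c^{g_i}_{\beta_i}(u)=p^{\nu_i}a_i$, while lower--degree reductions never feed back into degree $\beta_0$. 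Hence the reduction produces a representation
\[
\LC_{R[x\smallsetminus u]}(f)=\sum_{i:\,\beta_i\le\beta_0}h_i(u)\,c^{g_i}_{\beta_i}(u),\qquad h_i\in\Z[u],
\]
and since $v_p$ is a valuation with $v_p(h_i)\ge0$ this gives $v_p(\LC_{R[x\smallsetminus u]}(f))\ge\min_{\beta_i\le\beta_0}\nu_i$. Choosing $i$ attaining the minimum yields $\nu_i\le v_p(\LC_{R[x\smallsetminus u]}(f))$ and $\beta_i\le\beta_0$, i.e.\ $\LT_{R[x\smallsetminus u]}(g_i)\mid\LT_{R[x\smallsetminus u]}(f)$. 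Together with the trivial identity $\langle G\rangle R[x\smallsetminus u]=IR[x\smallsetminus u]$ this proves that $G$ is a strong Gr\"obner basis over $R[x\smallsetminus u]$. The delicate point to get right is precisely the self--containedness of the degree--$\beta_0$ reduction, which is what licenses the valuation estimate; a naive attempt using only the single $g_i$ dividing $\LT_{\Z[x]}(f)$ relates $\nu_i$ to $v_p$ of the wrong ($u$--leading) coefficient and fails.

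Finally, for (3) I would first note $h\in S$: each $a_i\notin\langle p\rangle$, so $\overline h=\lcm(\overline{a_i})\neq0$ in the factorial ring $\F_p[u]$, whence $h\notin\langle p\rangle$; as $h\in\Z[u]$ this gives $I:h^\infty\subseteq S^{-1}I\cap\Z[x]=IR[x\smallsetminus u]\cap\Z[x]$. For the reverse inclusion I would use (2): given $f\in IR[x\smallsetminus u]\cap\Z[x]$, reduce it to $0$ by the strong Gr\"obner basis $G$ over $R[x\smallsetminus u]$. Each step divides a leading coefficient by $\LC_{R[x\smallsetminus u]}(g_i)=p^{\nu_i}a_i$; the factor $p^{\nu_i}$ divides exactly (by the divisibility condition), so no power of $p$ enters the denominators, and the only denominators introduced are the units $a_i$. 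As the reduction terminates (the leading $(x\smallsetminus u)$--monomial strictly decreases), one obtains a finite standard representation $f=\sum_i q_i g_i$ with $q_i\in\Z[u][a_1^{-1},\dots,a_k^{-1}][x\smallsetminus u]$. Multiplying by a sufficiently high power of $h=\lcm(a_1,\dots,a_k)$ clears all denominators, so $h^N f=\sum_i(h^Nq_i)g_i\in I$ and $f\in I:h^\infty$. Hence $IR[x\smallsetminus u]\cap\Z[x]=I:h^\infty$, which combined with (1) also identifies this saturation with $Q$.
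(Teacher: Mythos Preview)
Your proof is correct. For (1) and (3) you follow essentially the paper's route: the paper also shows that the radical of $J$ (hence every embedded prime) meets $\Z[u]\smallsetminus\langle p\rangle$ by maximal independence of $u$, and for (3) it simply invokes the analogue over a field, which your explicit bookkeeping of denominators in $\Z[u][a_1^{-1},\dots,a_k^{-1}]$ makes precise.

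The real difference is in (2). The paper does exactly what you label the ``naive'' one--step argument: pick $s\in S$ with $sf\in I$, take the single $g\in G$ with $\LT_{\Z[x]}(g)\mid\LT_{\Z[x]}(sf)$, write $sf=p^{\nu}a\,(x\smallsetminus u)^{\alpha}+(\text{lower})$ with $a\in\Z[u]\smallsetminus\langle p\rangle$, and from $\LT_{\Z[x]}(sf)=p^{\nu}\LT_{\Z[x]}(a)\,(x\smallsetminus u)^{\alpha}$ conclude that the $p$--content $\mu$ of $\LC_{R[x\smallsetminus u]}(g)$ satisfies $\mu\le\nu$, so this same $g$ divides over $R$. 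Your reservation about that step is on target: $a\notin\langle p\rangle$ does not force $p\nmid\LC_{\Z[u]}(a)$ (e.g.\ $a=py+1$), so the paper's intermediate inequality ``$\tau\le\nu$'' compares against the valuation of the $u$--leading integer coefficient rather than against $v_p$ of the full $\Z[u]$--coefficient. Your approach --- running the $\Z[x]$--reduction of $f$ to zero, isolating the coefficient of the top monomial $(x\smallsetminus u)^{\beta_0}$ as a $\Z[u]$--combination $\sum h_i\,p^{\nu_i}a_i$ over those $i$ with $(x\smallsetminus u)^{\beta_i}\mid(x\smallsetminus u)^{\beta_0}$, and then choosing $i$ of minimal $\nu_i$ --- may select a different $g_i$ than the paper's, but one that is guaranteed to witness the $R$--divisibility. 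The cost is a longer argument; the gain is that the valuation inequality is established cleanly, independent of which $u$--monomial in $a$ happens to be leading.
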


\begin{proof} \
\begin{enumerate}
\item[(1)] Let $K=\sqrt{J}$ and $\overline{K}=K\F_p[x]$ then $\overline{K}\supsetneq 
           \overline{P}=P\F_p$. This implies that $\overline{K}\cap\F_p[u]\neq \gen 0$ since
           $u \subset x$ is maximally independent for $\overline P$ and therefore $K\cap 
           (\Z[u]\smallsetminus\langle p\rangle)\neq \emptyset$. 
           Thus it holds $JR[x\smallsetminus u]=R[x\smallsetminus u]$. Finally, 
           because $Q$ is primary, we obtain $I R[x\smallsetminus u]\cap\Z[x]=
           Q R[x\smallsetminus u] \cap \Z[x]=Q$.
\item[(2)] Let $f\in I R[x\smallsetminus u]$ and choose $s\in 
           \Z[u]\smallsetminus\langle p\rangle$ such that $sf\in I$. Since $G$ is a 
           strong Gr\"obner basis of  $I$ there exists a $g\in G$ such that 
           $\LT_{\Z[x]}(g)\mid\LT_{\Z[x]}(sf)$. As a polynomial in $x 
           \smallsetminus u$ with coefficients in $R$, the element $sf$ can be written 
           as $sf=p^\nu a(x\smallsetminus u)^\alpha + (\text{terms in $x\smallsetminus u$ 
           of smaller order})$ with $a\in \Z[u]\smallsetminus \langle p\rangle$. If $p^\tau$ is 
           the maximal power of $p$ dividing the leading coefficient $\LC_{\Z[x]}(g)$ of $g$ 
           then $\tau\leq \nu$ since $\LT_{\Z[x]}(sf)=p^\nu\LT_{\Z[x]}(a)(x\smallsetminus 
           u)^\alpha$. Now we can write $g$ as an element of $R[x\smallsetminus u]$ w.r.t.
           the corresponding ordering, i.e. $g=p^\mu b(x\smallsetminus u)^\beta + (\text{terms in 
           $x\smallsetminus u$ of smaller order})$ with $b\in \Z[u]\smallsetminus \langle 
           p\rangle$ and $\mu\leq \tau \leq \nu$. By definition we have $\LT_{R[x\smallsetminus u]}(g)
           =p^\mu b(x\smallsetminus u)^\beta$ resp. $\LT_{R[x\smallsetminus  u]}(f)=p^\nu 
           \frac{a}{s}(x\smallsetminus u)^\alpha$ and on the other hand it holds
           $\LT_{\Z[x]}(g)=p^\mu \LT_{\Z[x]}(b)(x\smallsetminus u)^\beta$ resp. $\LT_{\Z[x]}(sf)
           =p^\nu\LT_{\Z[x]}(a)(x\smallsetminus u)^\alpha$. Thus the assumption 
           $\LT_{\Z[x]}(g) \mid \LT_{\Z[x]}(sf)$ implies $(x\smallsetminus u)^\beta \mid (x 
           \smallsetminus u)^\alpha$ and consequently
           $\LT_{R[x\smallsetminus u]}(g) \mid \LT_{R[x \smallsetminus u]}(f)$. 
           This proves (2).
\item[(3)] Follows from (2) similarly to the proof for fields (cf. \cite{GTZ}, \cite{GP}).
\end{enumerate}
\end{proof}

The following Lemma is a consequence of the Lemma of Artin--Rees (cf. \cite{GP}).

\begin{lem} \label{lemIntPrimaryComp}
Let $I\subseteq\Z[x]$ be an ideal and $J$ the intersection of all primary
components of $I$ associated to the minimal prime ideals of $I$. Then there exists a
natural number $m$ such that $I=J\cap(I+(I:J)^m)$.
\end{lem}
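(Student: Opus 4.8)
The plan is to prove the two inclusions separately, reducing the nontrivial one to a clean statement that the Artin--Rees lemma settles directly. I would observe first that $J$ is an intersection of primary components of $I$, hence $I\subseteq J$; this is in fact the only property of $J$ that the argument uses. The inclusion $I\subseteq J\cap(I+(I:J)^m)$ is then immediate for every $m$, since $I\subseteq J$ and $I\subseteq I+(I:J)^m$.

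For the reverse inclusion I would reduce to the key sub-claim
$$J\cap (I:J)^m\subseteq I \quad\text{for $m$ large enough.}$$
Indeed, let $y\in J\cap(I+(I:J)^m)$ and write $y=a+b$ with $a\in I$ and $b\in(I:J)^m$. Since $y\in J$ and $a\in I\subseteq J$, we get $b=y-a\in J$, so that $b\in J\cap(I:J)^m$. Granting the sub-claim this forces $b\in I$, and therefore $y=a+b\in I$, which gives $J\cap(I+(I:J)^m)\subseteq I$.

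To establish the sub-claim I would apply the Artin--Rees lemma in the Noetherian ring $\Z[x]$ (Noetherian by the Hilbert basis theorem) to the ideal $\mathfrak a:=I:J$, the module $M=\Z[x]$, and the submodule $N=J$. This yields an integer $c\geq 0$ with $\mathfrak a^m\cap J=\mathfrak a^{m-c}(\mathfrak a^c\cap J)$ for all $m\geq c$. Using the defining property $\mathfrak a\cdot J=(I:J)J\subseteq I$, for every $m\geq c+1$ I obtain $\mathfrak a^m\cap J\subseteq \mathfrak a^{m-c}J\subseteq \mathfrak a J\subseteq I$, which is precisely the sub-claim for such $m$. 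Combining the two inclusions then gives $I=J\cap(I+(I:J)^m)$.

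The argument is short once the reduction is in place. The main thing to get right is the reduction itself---seeing that membership of $y$ in $J$ pushes the remainder $b$ back into $J\cap(I:J)^m$---after which Artin--Rees does all the work by collapsing $\mathfrak a^m\cap J$ into $\mathfrak a J\subseteq I$ and pinning down a uniform exponent $m$. I expect no genuine obstacle beyond this bookkeeping; in particular, only the inclusion $I\subseteq J$ is needed, so the conclusion does not actually depend on $J$ being assembled from the isolated primary components.
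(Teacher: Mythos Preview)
Your proof is correct and follows exactly the route the paper indicates: the paper does not spell out a proof at all but simply states that the lemma is a consequence of Artin--Rees, and your argument supplies precisely those details. Your observation that only the inclusion $I\subseteq J$ is used (together with the tautology $(I{:}J)J\subseteq I$) is also accurate and a nice remark.
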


\begin{nota}
Given an ideal $I \subseteq \Z[x]$ we can always choose a finite set of polynomials
$F_I = \{f_1,\ldots, f_k\}$ such that $I = \gen{F_I}$ and we denote $F_I^{(m)} := \{f_1^m,
\ldots, f_k^m\}$ for $m \in \N$.
\end{nota}

\begin{cor}
With the assumptions and notations of Lemma \ref{lemIntPrimaryComp} there exists 
a natural number $m$ such that $I=J\cap(I+\langle F_{I:J}^{(m)} \rangle)$. 
\end{cor}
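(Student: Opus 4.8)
The plan is to derive the corollary directly from Lemma~\ref{lemIntPrimaryComp} by a sandwich argument: the ideal $\gen{F_{I:J}^{(m)}}$ generated by the $m$-th powers of a fixed generating set of $I:J$ sits inside the ideal power $(I:J)^m$, and this single inclusion is all that is needed. Throughout I would write $K := I:J$ and fix $F_{I:J} = \{f_1,\ldots,f_k\}$ with $K = \gen{F_{I:J}}$ as in the preceding Notation.

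First I would record the elementary containment $\gen{F_{I:J}^{(m)}} = \gen{f_1^m,\ldots,f_k^m} \subseteq K^m$, valid for every $m \in \N$, which holds simply because $f_i \in K$ forces $f_i^m \in K^m$. Next I would apply Lemma~\ref{lemIntPrimaryComp} to obtain a natural number $m$ with $I = J \cap (I + K^m)$, and I claim the \emph{same} $m$ proves the corollary. Indeed, since $I \subseteq J$ (the intersection $J$ of certain primary components of $I$ certainly contains $I$, being the intersection of a subfamily of all primary components) and $I \subseteq I + \gen{F_{I:J}^{(m)}}$ trivially, we get $I \subseteq J \cap (I + \gen{F_{I:J}^{(m)}})$. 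Conversely, the containment just recorded gives $I + \gen{F_{I:J}^{(m)}} \subseteq I + K^m$, hence $J \cap (I + \gen{F_{I:J}^{(m)}}) \subseteq J \cap (I + K^m) = I$. The two inclusions together yield the asserted equality.

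I expect no genuine obstacle: all the substance is already carried by Lemma~\ref{lemIntPrimaryComp} (which rests on Artin--Rees), and the corollary merely replaces the abstract power $(I:J)^m$ by the explicitly generated ideal $\gen{F_{I:J}^{(m)}}$ that one actually wants for an implementation. The only point to keep straight is the direction of the inclusion $\gen{F_{I:J}^{(m)}} \subseteq K^m$: it is the easy direction and is precisely what the squeeze requires, so no reverse containment (which would demand a pigeonhole bound on exponents) ever enters the argument.
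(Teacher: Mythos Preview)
Your argument is correct and is exactly the paper's own proof: apply Lemma~\ref{lemIntPrimaryComp} to get $m$ with $I=J\cap(I+(I:J)^m)$, then squeeze $I\subseteq J\cap(I+\langle F_{I:J}^{(m)}\rangle)\subseteq J\cap(I+(I:J)^m)=I$ using $\langle F_{I:J}^{(m)}\rangle\subseteq (I:J)^m$. The only extra detail you spell out beyond the paper's terse version is the justification that $I\subseteq J$, which is indeed immediate from the definition of $J$.
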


\begin{proof}
Due to Lemma \ref{lemIntPrimaryComp} there exists an $m$ such that $I=J\cap(I+(I:J)^m)$. 
Now we have $I\subseteq J \cap (I+\langle F_{I:J}^{(m)} \rangle) \subseteq J \cap (I+(I:J)^m)
= I$ and therefore $I=J\cap(I+\langle F_{I:J}^{(m)} \rangle)$.
\end{proof}

\begin{rem}
The corollary is very important from a computational point of view because 
$\langle F_{I:J}^{(m)} \rangle$ has fewer generators than $(I:J)^m$.
\end{rem}

\section{The algorithms} \label{secAlg}

In this section we present the algorithm to compute a primary decomposition 
of an ideal in a polynomial ring over the integers by applying the results of section 
\ref{secBasDefRes} resp. the introduction (section \ref{secIntro}). 

\vspace{0.2cm}

Algorithm \ref{algPrimdecZ} computes the primary decomposition of an ideal in 
$\Z[x]$\footnote{The corresponding procedures are implemented in \singular in the 
library \texttt{primdecint.lib}.} with the aid of algorithms \ref{algSeparatorsZ} and 
\ref{algExtractZ} which we introduce subsequently in detail.

\begin{rem}
Algorithm \ref{algPrimdecZ} can easily be parallelized by computing  - depending on the prime
factorization $q = p_1^{\nu_1} \cdots p_r^{\nu_r}$ of $q$ where $\gen q = I \cap \Z$ - either the 
primary decomposition or the set of minimal associated primes in positive characteristic in parallel.
If $\nu_i = 1$ we have to compute the primary decomposition whereas, if $\nu_i > 1$, we have to
compute the minimal associated primes of $I\F_{p_i}[x]$ in $\F_{p_i}[x]$. These $r$ computations in 
positive characteristic are independent from each other such that they can also run separately in 
parallel on at most $r$ processors if available.
\end{rem}

\pagebreak

\begin{algorithm}
\caption{\textsc{primdecZ}} \label{algPrimdecZ}
\begin{algorithmic}
\REQUIRE $F_I = \{f_1, \ldots, f_k\}$, $I=\langle F_I \rangle\Z[x]$, optional: a 
         test ideal $T$.
\ENSURE  $L := \{(Q_1, P_1), \ldots, (Q_s, P_s)\}$, $I=Q_1\cap \ldots \cap Q_s$
         irredundant primary decomposition with $P_i=\sqrt{Q_i}$.
\vspace{0.1cm}
\IF{$T$ is not given in the input}
\STATE $T:=\langle 1 \rangle$;
\ENDIF
\STATE $G:=$ strong Gr\"obner basis of $I$;
\STATE $q:=$ generator of $I\cap \Z$;\footnotemark
\IF{$q=0$}
\STATE compute $h\in\Z$ such that $I:h=I\Q[x]\cap\Z[x]$;\footnotemark
\STATE compute $\overline{Q}_1, \ldots, \overline{Q}_s$, an irredundant
       primary decomposition of $I\Q[x]$ and $\overline{P}_i=\sqrt{\overline{Q}_i}$ 
       the associated primes;
\STATE compute $Q_i = \overline{Q}_i\cap\Z[x]$, $P_i=\overline{P}_i\cap\Z[x]$;\footnotemark
\STATE $L:=\{(Q_1, P_1), \ldots, (Q_s, P_s)\}$;
\STATE $M := $ \textsc{primdecZ}$(\langle I, h\rangle)$ \& remove redundant primary ideals from $M$;
\RETURN $L\cup M$;
\ELSE 
\STATE compute $q=p_1^{\nu_1}\ldots p_r^{\nu_r}$, the prime factorization of $q$; 
\FOR{$i=1,\ldots,r$}
\IF{$\nu_i=1$}
\STATE compute $\overline{L}_i=\{(\overline{Q}_1^{(i)}, \overline{P}_1^{(i)}), \ldots, 
       (\overline{Q}_{s_i}^{(i)}, \overline{P}_{s_i}^{(i)})\}$, the primary decomposition of 
       $I\F_{p_i}[x]$;
\STATE $L_i := \{(Q_1^{(i)}, P_1^{(i)}), \ldots, (Q_{s_i}^{(i)}, P_{s_i}^{(i)})\}$, the lifting of 
       $\overline{L}_i$ to $\Z[x]$;\footnotemark
\ELSE
\STATE compute $\overline{A}_i=\{\overline{P}_1^{(i)},\ldots, \overline{P}_{s_i}^{(i)}\}$,
       the set of minimal associated primes of $I\F_{p_i}[x]$ and independent sets of 
       variables $\overline{u}_1^{(i)}, \ldots, \overline{u}_{s_i}^{(i)}$ for 
       $\overline{P}_1^{(i)}, \ldots, \overline{P}_{s_i}^{(i)}$;
\STATE $A_i:=\{P_1^{(i)},\ldots, P_{s_i}^{(i)}\}$, the lifting of $\overline{A}_i$ to $\Z[x]$;
\FOR{$j=1,\ldots,s_i$}
\STATE $Q_j^{(i)} := $ \textsc{extractZ}$(I, A_i, P_j^{(i)}, \overline{u}_j^{(i)})$;
\STATE $L_i := \{(Q_1^{(i)}, P_1^{(i)}), \ldots, (Q_{s_i}^{(i)}, P_{s_i}^{(i)})\}$;
\ENDFOR
\ENDIF
\ENDFOR
\STATE $L := L_1 \cup\ldots\cup L_r$;
\STATE compute $J$, the intersection of all primary ideals in $L$ and $T$;
\IF{$J=I$} 
\RETURN $L$;
\ENDIF
\STATE compute $F_{I:J}$ such that $\langle F_{I:J} \rangle = I:J$;
\STATE compute $m$ such that $J \cap (I+\langle F_{I:J}^{(m)} \rangle) = I$;
\STATE $M := $ \textsc{primdecZ}$(I+\langle F_{I:J}^{(m)} \rangle, J)$ \& remove redundant primary 
               ideals from $M$; 
\RETURN $L\cup M$;
\ENDIF
\end{algorithmic}
\end{algorithm}
\footnotetext[8]{$q$ is either $0$ or the unique element in $G$ of degree $0$.}
\footnotetext[9]{$h$ is a suitable power of the least common multiple of all leading 
                 coefficients of elements in $G$, cf. Lemma \ref{lemGBQtoZ}.}
\footnotetext[10]{$Q_i$ resp. $P_i$ are primary resp. prime due to (4) of the introduction (section 
                 \ref{secIntro}).}
\footnotetext[11]{If $I=\langle F_I \rangle \subseteq \F_p[x]$ then its lifting is obtained by 
                 $\langle p, F_I \rangle$ with the canonical lifting of $F_I$.}       

\pagebreak

The algorithm to compute the separators is based on Lemma \ref{lemSeparator}.

\begin{algorithm}
\caption{\textsc{separatorsZ}} \label{algSeparatorsZ}
\begin{algorithmic}
\REQUIRE $B$ a list of prime ideals generated by a Gr\"obner basis w.r.t. some ordering,
         not contained in each other, $P\in B$.
\ENSURE Polynomial $s$ such that $s\not\in P$, $s\in Q$ for all $Q\in B\backslash\{P\}$.
\vspace{0.1cm}
\FOR{$Q \in B \backslash \{P\}$}
\STATE choose $s_Q$ in the Gr\"obner basis of $Q$ such that $s_Q \notin P$;
\ENDFOR
\RETURN $\prod_{Q \in B \backslash \{P\}} s_Q$;
\end{algorithmic}
\end{algorithm}

The algorithm to extract the primary component from the pseudo--primary component
is based on the Extraction Lemma \ref{lemExtraction}.

\begin{algorithm}
\caption{\textsc{extractZ}} \label{algExtractZ}
\begin{algorithmic}
\REQUIRE $I \subseteq \Z[x]$ an ideal, $B$ the list of minimal associated primes of $I$, 
         $P\in B$ with $P\cap \Z = \gen p$ for some prime $p$, $u\subset x$ an 
         independent set of variables for $P\F_p[x]$.
\ENSURE  The primary component $Q$ of $I$ associated to $P$.
\vspace{0.1cm}
\STATE $s :=$ \textsc{separatorsZ}$(P, B)$;
\STATE $I=I:s^\infty$;
\STATE compute $G=\{g_1,\ldots,g_k\}$, a strong Gr\"obner basis of $I$ w.r.t. a block 
       ordering satisfying $x \smallsetminus u \gg u$;
\STATE compute $\{a_1,\ldots,a_k\}$ such that 
       $\LC_{\Z[u]_{\langle p \rangle}[x \smallsetminus u]}(g_i)= p^{\nu_i} \cdot a_i$ with
       $a_i \in \Z[u] \smallsetminus \gen p$;
\STATE compute $h=\lcm(a_1,\ldots,a_k)$, the least common multiple of $a_1,\ldots,a_k$; 
\RETURN $I:h^\infty$;
\end{algorithmic}
\end{algorithm}

\begin{exmp}
Consider $I=\langle 9, 3x, 3y\rangle$, $P=\langle 3\rangle$, $u=\{x,y\}$ and $B=\{P\}$ in
$\Z[x,y]$. Then we obtain $s=1$, $h=xy$ and thus $I:h^\infty=\langle 3 \rangle$.
\end{exmp}

\section{Examples and timings} \label{secExTime}

In this section we provide examples on which we time the algorithm \texttt{primdecZ} (cf. 
section \ref{secAlg}) and its parallelization  implemented in \textsc{Singular}. 
Timings are conducted by using the 32-bit version of \singular{3-1-2} on an AMD Opteron 
6174 with $48$ CPUs, 800 MHz each, 128 GB RAM under the Gentoo Linux operating system. 
All examples are chosen from The SymbolicData Project (cf. \cite{G}). 

\begin{rem}
The parallelization of our algorithm is attained via multiple processes organized by \singular library 
code. Consequently a future aim is to enable parallelization in the kernel via multiple threads.
\end{rem}

\begin{rem}
In \singular one can compute Gr\"obner bases not only over fields but also over the rings $\Z$
and $\Z/m\Z$ (resp. $\Z/2^l\Z$ as a special case of $\Z/m\Z$). For the integers the implementation is 
based on the theory for Gr\"obner bases over integral domains as introduced by Adams and 
Loustaunau (cf. \cite{AL}, chapter 4). For factor rings further theory needed to be developed by
the \textsc{Singular}-Team in Kaiserslautern (cf. \cite{GSW}, \cite{W}). Details about the corresponding
implementation are presented by Wienand (cf. \cite{W}, chapter 3).
\end{rem}

\pagebreak

We choose the following examples:

\begin{exmp} \label{ex1}
Coefficients: \texttt{integer}, ordering: \texttt{dp}\footnote{\emph{Degree reverse lexicographical ordering:} 
Let $x^\alpha, x^\beta$ be two monomials in $x$, i.e. $\alpha, \beta \in \N^n$. $x^\alpha >_{dp} x^\beta \, :
\Longleftrightarrow \, \deg(x^\alpha) > \deg(x^\beta)$ or $(\deg(x^\alpha) = \deg(x^\beta)$ and $\exists \, 1 
\leq i \leq n: \; \alpha_n = \beta_n, \ldots, \alpha_{i-1} = \beta_{i-1}, \alpha_i < \beta_i)$, where $\deg(x^\alpha) 
= \alpha_1 + \ldots + \alpha_n$; cf. \cite{GP}.}, \texttt{Gerdt-93a.xml} (cf. \cite{G}) considered 
with another integer generator $2 \cdot 3 \cdot 5 \cdot 13 \cdot 17 \cdot 181$.
\end{exmp}

\begin{exmp} \label{ex2}
Coefficients: \texttt{integer}, ordering: \texttt{dp}, \texttt{Gerdt-93a.xml} (cf. \cite{G}) considered 
with another integer generator $2 \cdot 3 \cdot 5 \cdot 13 \cdot 17 \cdot 31 \cdot 181$.
\end{exmp}

\begin{exmp} \label{ex3}
Coefficients: \texttt{integer}, ordering: \texttt{dp}, \texttt{Gerdt-93a.xml} (cf. \cite{G}) considered 
with another integer generator $2 \cdot 3 \cdot 5 \cdot 13 \cdot 17 \cdot 31 \cdot 37 \cdot 181$.
\end{exmp}

\begin{exmp} \label{ex4}
Coefficients: \texttt{integer}, ordering: \texttt{dp}, \texttt{Steidel\_6.xml} (cf. \cite{ES}) considered
with another integer generator $2 \cdot 3 \cdot 5 \cdot 7 \cdot 11 \cdot 13 \cdot 17 \cdot 19 \cdot 23$.
\end{exmp}

\begin{exmp} \label{ex5}
Coefficients: \texttt{integer}, ordering: \texttt{dp}, \texttt{Steidel\_6.xml} (cf. \cite{ES}) considered
with another integer generator $2 \cdot 3^2 \cdot 5 \cdot 7^3 \cdot 11 \cdot 13 \cdot 17 \cdot 19 \cdot 23$.
\end{exmp}

\begin{exmp} \label{ex6}
Coefficients: \texttt{integer}, ordering: \texttt{dp}, \texttt{Gonnet-83.xml} (cf. \cite{BGK}) considered
with another integer generator $2 \cdot 3 \cdot 5 \cdot 7 \cdot 11 \cdot 13 \cdot 17 \cdot 19 \cdot 23$.
\end{exmp}

\begin{exmp} \label{ex7}
Coefficients: \texttt{integer}, ordering: \texttt{dp}, \texttt{Gonnet-83.xml} (cf. \cite{BGK}) considered
with another integer generator $2^2 \cdot 3^2 \cdot 5 \cdot 7 \cdot 11 \cdot 13 \cdot 17 \cdot 19 \cdot 23$.
\end{exmp}

Table \ref{tabPrimdecZ} summarizes the results where $\texttt{primdecZ}^*(k)$ denotes 
the parallelized version of the algorithm using $k$ processes. All timings are given in seconds.

\begin{table}[hbt]
\begin{center}
\begin{tabular}{|r|r|r|r|r|}
\hline
Example & \texttt{primdecZ} & $\texttt{primdecZ}^*(2)$ & $\texttt{primdecZ}^*(3)$ & $\texttt{primdecZ}^*(4)$\\
\hline \hline
\ref{ex1} & \hspace{1.5cm} 604 & \hspace{1.5cm} 383 & \hspace{1.5cm} 339 & \hspace{1.5cm} 249 \\ \hline
\ref{ex2} & 757 & 480 & 392 & 350 \\ \hline
\ref{ex3} & 907 & 542 & 396 & 396 \\ \hline
\ref{ex4} & 17 & 9 & 7 & 4 \\ \hline 
\ref{ex5} & 10 & 6 & 5 & 4 \\ \hline 
\ref{ex6} & 21 & 14 & 10 & 8 \\ \hline 
\ref{ex7} & 39 & 35 & 34 & 31 \\ \hline 
\end{tabular}
\end{center}
\hspace{15mm}
\caption{Total running times for computing a primary decomposition of the considered examples via 
\texttt{primdecZ} and its parallelized variant $\texttt{primdecZ}^*(k)$ for $k = 2,3,4$.} 
\label{tabPrimdecZ}
\end{table}

\section{Acknowledgement}

The authors would like to thank the anonymous referees whose comments led to great improvement
of the paper.

\end{document}